\numberwithin{equation}{section}
\newtheorem{main}{Theorem}[section]
\newtheorem{bounded0}[main]{Theorem}
\newtheorem{total}[main]{Theorem}
\newtheorem{Simon}{Lemma}[section]
\newtheorem{Lp}[Simon]{Proposition}
\newtheorem{Lp2}[Simon]{Propostion}
\newtheorem{MeanValue}{Lemma}[section]
\newtheorem{Sobolev}[MeanValue]{Lemma}
\newtheorem{estimate}[MeanValue]{Theorem}
\newtheorem{theorem}{Theorem}[section]
\newtheorem{bound}[theorem]{Proposition}
\newtheorem{pro1}[theorem]{Theorem}
\newtheorem{pro2}[theorem]{Theorem}
\newtheorem{2}{Remark}
\newtheorem{3}[2]{Remark}
\newtheorem{4}[2]{Remark}
\begin{document}
\title{On stable hypersurfaces with constant mean curvature in Euclidean spaces}

\author{Jinpeng Lu}
\address{Department of Mathematical Sciences, Tsinghua University,
Beijing 100084, P.R. CHINA} \email{lujp11@mails.tsinghua.edu.cn}

\maketitle
\begin{abstract}
In this paper, we derive curvature estimates for strongly stable hypersurfaces with constant mean curvature immersed in $\mathbb{R}^{n+1}$, which show that the locally controlled volume growth yields a globally controlled volume growth if $\partial M=\emptyset$. Moreover, we deduce a Bernstein-type theorem for complete stable hypersurfaces with constant mean curvature of arbitrary dimension, given a finite $L^p$-norm curvature condition.
\end{abstract}

\section{Introduction}

\indent In \cite{SSY}, R.~Schoen, L.~Simon and S.T.~Yau proved a curvature type estimate for stable minimal hypersurfaces in higher dimensional Riemannian manifolds, which yielded the general Bernstein theorem for $n\leqslant 5$. Then for the surface case, the estimate of this type was generalized to stable minimal surfaces immersed in a general three-manifold by R.~Schoen in \cite{RS}, and he proved that a complete stable minimal surface in a three-manifold of nonnegative Ricci curvature is totally geodesic. Later in \cite{CM}, T.~Colding and W.~Minicozzi gave another type of estimates for stable minimal surfaces in a three-manifold with trivial normal bundle.\\
\indent As to hypersurfaces with constant mean curvature (or simply CMC hypersurfaces), P.~B$\acute{\textrm{e}}$rard and L.~Hauswirth obtained a curvature type estimate for strongly stable CMC surfaces immersed in space forms in \cite{BEHA}, which was generalized in \cite{SZ} to strongly stable CMC surfaces in a general three-manifold with bounded sectional curvature and trivial normal bundle by S.~Zhang.\\
\indent For stable CMC hypersurfaces in higher dimension, Y.~Shen and X.~Zhu have proved that a complete stable minimal hypersurface in $\mathbb{R}^{n+1}$ with $\int_M |A|^n dv<\infty$ must be a hyperplane \cite{ShZh1}, which was later generalized to strongly stable CMC hypersurfaces with $\int_M |\phi|^n dv<\infty$ by themselves \cite{ShZh}. For conditions of this type, the conclusion remains true, if $\int_M |\phi|^2 dv<\infty$ $(n\leqslant 5)$ by Alencar and do Carmo \cite{AD2}, or $\int_M |\phi| dv<\infty$ $(n\leqslant 6)$ by Shen and Zhu \cite{ShZh}.\\
\indent In this paper, we will prove curvature estimates for strongly stable CMC hypersurfaces in higher dimensional Euclidean spaces, given a local volume growth condition. We also prove a Bernstein-type theorem without a dimensional assumption, if $\int_M |\phi|^p dv<\infty$, for some $p\in[1-\frac{2}{n},\infty)$.\\

\indent Throughout this paper, $\phi: M^n \to N^{n+1}(c)$ is an isometric immersion with constant mean curvature $H$ into a space form of constant sectional curvature $c$. The second fundamental form is denoted by $A$, and $\phi$ is the traceless second fundamental form defined by $$\phi=A-HId.$$ The volume element of $M$ is denoted by $dv$. $B_R$ denotes the intrinsic geodesic ball in $M$, and the extrinsic ball is denoted by $$D_R(x)=\{y\in M: r(x,y)< R\}=\overline B_R(x)\cap M,$$
where $\overline B_R(x)$ is the open ball of radius $R$ in $\mathbb{R}^{n+1}$, and $r$ is the Euclidean distance function in $\mathbb{R}^{n+1}$. $\overline B^C_R(x)$ and $D^C_R(x)$ stand for the connected component of $\overline B_R(x)$ and $D_R(x)$ containing $x$. Note that $B_R(x)$ is contained in $D^C_R(x)$.\\
\indent We say a CMC hypersurface is strongly stable, if for any $h\in C_0^{\infty}(M)$,
$$\int_M (|A|^2+nc)h^2\leqslant\int_M |\nabla h|^2,$$
which is equivalent to
\begin{equation}
\int_M [|\phi|^2+n(H^2+c)]h^2\leqslant\int_M |\nabla h|^2.
\end{equation}
On the other hand, a hypersurface with constant mean curvature is weakly stable, if for all $h\in C_0^{\infty}(M)$ satisfying $\int_M f=0$,
$$\int_M [|\phi|^2+n(H^2+c)]h^2\leqslant\int_M |\nabla h|^2.$$
For $H=0$, an immersion is called stable when it refers to the natural stability on minimal hypersurface, i.e., for all $h\in C_0^{\infty}(M)$.\\

\indent The main results of this paper are as follows.
\begin{main}
Let $M^n$ be an oriented strongly stable hypersurface with constant mean curvature $H$ immersed in $\mathbb{R}^{n+1}$ $($$n\geqslant 2$$)$. If there exists $R_0$, such that for all $R<\min\{R_c, r(x,\partial M)\}$ and for some $\alpha$ $($not necessarily positive$)$,
\begin{equation}
|D^C_{R}(x)|\leqslant C_1 R^{n+\alpha}, \\
\end{equation}
where the constant $C_1$ is independent on $R$ and $x$. Then for some $\delta=\delta(\alpha)>0$, there exists $r_0=r_0(n,H,\alpha,\delta)\leqslant 1$ such that for any $x\in M$ satisfying $\overline{B}^C_{r_0}(x)\cap\partial M =\emptyset$, the following inequality holds
$$\max_{0\leqslant\sigma\leqslant r_0}\sigma^{2+2\delta}\sup_{D^C_{r_0-\sigma}(x)}|\phi|^2\leqslant 1.$$
\end{main}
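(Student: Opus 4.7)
The plan is to combine the Simons-type identity for CMC hypersurfaces with the strong stability inequality (1.1) to obtain reverse-H\"older-type bounds on $|\phi|$, upgrade these to a pointwise estimate via Moser iteration, and then conclude the scale-invariant statement by a standard point-picking/rescaling argument. The volume growth (1.2) enters in two places: to close the Moser iteration (via Michael--Simon Sobolev on $M$), and to absorb the non-minimal $H$-terms that do not appear in the Schoen--Simon--Yau setting.

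First, I would write down the Simons inequality in its CMC form, which on $M^n \subset \mathbb{R}^{n+1}$ reads schematically
\begin{equation*}
|\phi|\Delta|\phi| \;\geq\; |\nabla|\phi||^{2} - c_{n}|\phi|^{4} - c_{n}' H |\phi|^{3},
\end{equation*}
valid weakly where $|\phi|>0$. Plugging the test function $h = |\phi|^{1+q}\eta$ (with $\eta\in C_{0}^{\infty}(D^{C}_{R}(x))$) into (1.1), multiplying by $|\phi|^{2q}\eta^{2}$ and integrating by parts against the Simons inequality, one obtains for $q$ in a suitable range
\begin{equation*}
\int_M |\phi|^{2+2q}\eta^{2}\;\leq\; C(n,q)\!\left[\int_{M}|\phi|^{2q}|\nabla \eta|^{2}+H\!\int_{M}|\phi|^{1+2q}\eta^{2}\right]\!.
\end{equation*}
The middle cubic term coming from $H$ is absorbed via Young's inequality, where the leftover factor is controlled by the local volume bound (1.2); this is what makes the constants (and eventually $r_{0}$) depend on $H$.

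Second, I would run a Moser iteration built from this reverse-H\"older inequality together with the Michael--Simon Sobolev inequality on $M$. Iterating with shrinking cutoffs $\eta_{k}$ supported in nested extrinsic balls $D^{C}_{R_{k}}(x)$, and using (1.2) to control the volume factors coming from the Sobolev constants, one arrives at a pointwise estimate of the form
\begin{equation*}
\sup_{D^{C}_{R/2}(x)}|\phi|^{2}\;\leq\; \frac{C}{R^{n+\alpha}}\int_{D^{C}_{R}(x)}|\phi|^{2}\,dv + CH^{2}R^{-2\delta} + CR^{-2-2\delta},
\end{equation*}
valid for all $R\leq r_{0}(n,H,\alpha,\delta)$; the exponent $\delta=\delta(\alpha)>0$ is dictated by the slack in the iteration exponents that (1.2) affords. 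The $R^{-2-2\delta}$ on the right is exactly what forces the exponent $2+2\delta$ in the conclusion.

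Finally, to get the scale-invariant statement I would argue by contradiction in the style of Choi--Schoen/SSY. Assume $\sigma_{0}^{2+2\delta}\sup_{D^{C}_{r_{0}-\sigma_{0}}}|\phi|^{2}>1$ for some $\sigma_{0}\in(0,r_{0}]$, pick $\sigma_{\ast}$ maximizing $\sigma \mapsto \sigma^{2+2\delta}\sup_{D^{C}_{r_{0}-\sigma}}|\phi|^{2}$, and select $y_{\ast}\in D^{C}_{r_{0}-\sigma_{\ast}}(x)$ realizing the supremum. Setting $\lambda=|\phi(y_{\ast})|$ and rescaling the immersion by $\lambda$, on the rescaled ball of radius $\lambda\sigma_{\ast}/2 \gtrsim \sigma_{\ast}^{-\delta}$ the maximality of $\sigma_{\ast}$ forces $|\tilde\phi|\leq 2^{1+\delta}$, so the pointwise estimate from the previous step applied on this enlarged scale contradicts $|\tilde\phi(y_{\ast})|=1$ once $r_{0}$ is small enough in terms of $n,H,\alpha,\delta$.

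The main obstacle I expect is the handling of the $H$-terms: the cubic term in Simons breaks the natural homogeneity of the minimal case, so one has to be careful that the absorbed error terms in both the reverse-H\"older step and the Moser iteration remain subcritical. This is only possible because (1.2) forces the working scale $R\leq r_{0}\leq 1$ and permits one to trade a power of $R$ for the constant $H$. A secondary technical point is ensuring all the cut-offs are supported in $D^{C}_{r_{0}}(x)\setminus \partial M$, which follows from the hypothesis $\overline{B}^{C}_{r_{0}}(x)\cap \partial M=\emptyset$.
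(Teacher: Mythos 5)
Your overall skeleton (Simons' inequality plus stability to get integral bounds, a mean-value-type inequality, and a Choi--Schoen point-picking argument) matches the paper's, but the central step as you state it has a genuine gap: the ordering of the Moser iteration and the point-picking is backwards, and the unconditional pointwise estimate you claim in your second step is not provable for general $n$. The stability-based reverse H\"older inequality only holds for exponents $q<\sqrt{2/n}$ (this is exactly the coefficient $1-\frac{(1+q+\varepsilon)q}{q+2/n}\geqslant 0$ in the paper's Proposition 2.2), so you cannot iterate it to arbitrarily high powers; and the $|\phi|^4$ nonlinearity in Simons' inequality cannot be linearized without an a priori sup bound. This is precisely the obstruction that forces the dimension restriction in Schoen--Simon--Yau, and your claimed bound $\sup_{D^C_{R/2}}|\phi|^2\leqslant CR^{-n-\alpha}\int_{D^C_R}|\phi|^2+CH^2R^{-2\delta}+CR^{-2-2\delta}$, combined with the stability bound $\int_{D^C_R}|\phi|^2\leqslant CR^{-2}|D^C_{2R}|$, would already yield the theorem (indeed a stronger $|\phi|\leqslant C/R$ estimate) with no point-picking at all --- a sign that it is too strong to be true. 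The paper instead does the point-picking \emph{first}: maximality of $\sigma_0$ gives $|\phi|^2\leqslant \eta^{-(2+2\delta)}$ on $D^C_\eta(x_0)$, which linearizes Simons' inequality to $\Delta|\phi|^2\geqslant -C_3\eta^{-(2+2\delta)}|\phi|^2$, and only then applies a mean value inequality (Lemma 3.1, proved not by Moser iteration but by thickening to the product $M\times[-R^{1+\delta},R^{1+\delta}]$ and applying the Michael--Simon mean value inequality to $f(x)\exp(\sqrt{C_2}t/R^{1+\delta})$, which produces the $R^{-(n-\delta)}$ factor). The right-hand side $\int_{D^C_\eta}|\phi|^4$ is then converted to a volume via Proposition 2.2 with $q=0$, and the volume hypothesis closes the loop as $\eta^{-5\delta-\alpha}\leqslant C2^{4+4\delta}$, which is contradictory once $\delta>-\alpha/5$ and $r_0$ is small. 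This last computation is also where $\delta(\alpha)$ and $r_0$ are actually determined, something your sketch leaves unexplained.

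Two further points. First, the error terms $CH^2R^{-2\delta}$ and $CR^{-2-2\delta}$ in your claimed estimate do not arise from any natural Moser iteration and appear reverse-engineered from the conclusion; in the paper the exponent $2+2\delta$ enters only through the normalization $(2\eta)^{2+2\delta}|\phi|^2(x_0)=1$ in the point-picking. Second, your rescaling by $\lambda=|\phi(y_*)|$ distorts the volume hypothesis: the rescaled balls satisfy $|\tilde D^C_R|\leqslant C_1\lambda^{-\alpha}R^{n+\alpha}$, and since the theorem explicitly allows $\alpha<0$ and $\lambda$ is large, this constant degenerates; the paper avoids rescaling entirely and works at the fixed scale $\eta$. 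If you reorder your argument --- point-pick, linearize, then run either the product-manifold mean value inequality or a Moser iteration for the \emph{linear} inequality $\Delta f\geqslant -\Lambda f$ with $\Lambda=C\eta^{-(2+2\delta)}$, and finish with Proposition 2.2 and the volume bound --- your approach becomes essentially the paper's.
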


This theorem shows that the controlled local volume growth implies a controlled global volume growth.

\begin{bounded0}
Suppose $M^n$ is an oriented strongly stable CMC hypersurface immersed in $\mathbb{R}^{n+1}$ $($$n\geqslant 2$$)$. If $M$ satisfies the volume condition $(1.2)$, then there exists a constant $C$, such that
$$\sup_{\{r(x,\partial M)>\epsilon\}} |\phi|(x)\leqslant C(n,H,\epsilon).$$
In particular, if $\partial M=\emptyset$, then $|\phi|$ is bounded on $M$. So there exists $K=K(n,H,\sup\limits_{M}|\phi|)$, such that for any $R$,
$$|B_R|\leqslant V(n,K,R),$$
where $V(n,K,R)$ stands for the volume of the geodesic ball of radius $R$ in the n-dimensional space form of sectional curvature K.
\end{bounded0}

As an application of the theorems above, we prove the following Bernstein-type theorem.

\begin{total}
Let $M^n$ be a complete oriented hypersurface with constant mean curvature $H$ immersed in $\mathbb{R}^{n+1}$ $($$n\geqslant 2$$)$ and $\partial M=\emptyset$. Assume there exists
$$1-\frac{2}{n}\leqslant p<\infty,$$
such that $$\int_M |\phi|^p dv<\infty.$$
$(1)$ If $M$ is weakly stable with $H\neq 0$, then $M$ has to be a round sphere;\\
$(2)$ If $M$ is non-compact, then $M$ has to be minimal;\\
$(3)$ If $M$ is stable minimal and $1-\frac{2}{n}\leqslant p\leqslant n$, then $M$ has to be a hyperplane.
\end{total}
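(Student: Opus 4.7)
The overall strategy is to show that the $L^p$ integrability hypothesis, combined with stability and the Simons identity, forces $|\phi|\equiv 0$ on $M$, so that $M$ is totally umbilical. Since a complete totally umbilical hypersurface in $\mathbb{R}^{n+1}$ is either a round sphere (if $H\neq 0$) or a hyperplane (if $H=0$), the three conclusions will then follow by case analysis: in (1), weak stability together with $H\neq 0$ leaves only the sphere (and in particular forces compactness); in (2), non-compactness excludes the sphere, hence $H=0$; in (3), stable-minimal together with totally umbilical immediately yields a hyperplane.

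The core technical ingredient is an integrated Simons-type inequality. Starting from Simons' identity
\begin{equation*}
\tfrac{1}{2}\Delta|\phi|^2 \;=\; |\nabla\phi|^2 - |\phi|^4 + nH^2|\phi|^2 + nH\operatorname{tr}(\phi^3),
\end{equation*}
and invoking Okumura's inequality $|\operatorname{tr}(\phi^3)|\le\tfrac{n-2}{\sqrt{n(n-1)}}|\phi|^3$ together with the refined Kato inequality $|\nabla\phi|^2\ge(1+\tfrac{2}{n})|\nabla|\phi||^2$, I would derive the pointwise bound
\begin{equation*}
|\phi|\Delta|\phi| \;\ge\; \tfrac{2}{n}|\nabla|\phi||^2 - |\phi|^4 + nH^2|\phi|^2 - c_n|H||\phi|^3.
\end{equation*}
Multiplying by $|\phi|^{q-2}\eta^2$ for a cutoff $\eta\in C_0^\infty(M)$ and integrating by parts, then combining with the stability inequality applied to $h=|\phi|^{q/2}\eta$, produces an algebraic coefficient $\tfrac{2}{n}-(\tfrac{q}{2}-1)^2$ multiplying $\int|\phi|^{q-2}|\nabla|\phi||^2\eta^2$; this is positive precisely for $q\in(2-2\sqrt{2/n},\,2+2\sqrt{2/n})$. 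Absorbing the cross term $|H||\phi|^{q+1}$ via Young's inequality and using the $L^\infty$-bound on $|\phi|$ supplied by Theorem~1.2 to control the residual $|\phi|^4$-type terms, one arrives at the fundamental estimate
\begin{equation*}
\int_M\bigl(|\phi|^2+nH^2\bigr)|\phi|^q\eta^2\,dv \;\le\; C(n,q)\int_M|\phi|^q\,|\nabla\eta|^2\,dv.
\end{equation*}

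The passage from local to global relies on the results already established. By Theorem~1.2 the curvature $|\phi|$ is globally bounded, so $|\phi|\in L^p$ upgrades to $|\phi|\in L^q$ for every $q\ge p$; this lets me choose $q$ inside the admissible window where the Simons--stability coefficient is positive. With the standard cutoffs $\eta_R\equiv 1$ on $B_R$, $\operatorname{supp}\eta_R\subset B_{2R}$, $|\nabla\eta_R|\le 2/R$, the right-hand side is dominated by $(4/R^2)\int_{B_{2R}\setminus B_R}|\phi|^q$, which vanishes as $R\to\infty$ because the $L^q$-tail vanishes. Letting $R\to\infty$ forces $\int_M(|\phi|^2+nH^2)|\phi|^q\,dv=0$, hence $|\phi|\equiv 0$ on $M$, after which the classification of totally umbilical hypersurfaces completes the argument.

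The principal obstacle is the narrowness of the window $q\in(2-2\sqrt{2/n},\,2+2\sqrt{2/n})$: the stated hypothesis range $p\in[1-\tfrac{2}{n},\infty)$ is not reachable by a direct insertion of $q=p$, but only after using the $L^\infty$-control to shift the exponent, and $p=1-\tfrac{2}{n}$ is precisely the threshold where the bootstrap remains self-consistent (setting $q=p$ the algebraic factor collapses exactly at $n=2$, forcing reliance on Theorem~1.2 for $n\ge 3$). A separate subtlety appears in the weakly stable case of part~(1): the stability inequality there applies only to mean-zero test functions, so one must use a corrected $h=|\phi|^{q/2}\eta - c_R$ with $c_R$ chosen to enforce $\int h\,dv=0$, and verify that the additional terms introduced by $c_R$ decay to zero as $R\to\infty$. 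Finally, the restriction $p\le n$ in part~(3) reflects the Schoen--Simon--Yau type iteration threshold in the minimal case, beyond which the Michael--Simon Sobolev embedding used to pass between exponents ceases to close.
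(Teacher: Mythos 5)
Your strategy---integrate Simons' inequality against stability to force $|\phi|\equiv 0$, then classify totally umbilical hypersurfaces---has genuine gaps in two of the three parts, and it is not the route the paper takes. The paper's proof is a reduction to three external theorems after first establishing, from $\int_M|\phi|^p<\infty$ with $p\geqslant 1-\tfrac{2}{n}$ alone (no stability), that $|\phi|$ is bounded (Proposition 4.1, via the mean value inequality applied to $f^{\gamma}$) and that $|\phi|\to 0$ at infinity on a non-compact end (Theorem 4.2, via Moser iteration); a compactness theorem in the style of do Carmo--Cheung--Santos (Theorem 4.3) then shows that $H\neq 0$ forces $M$ to be compact. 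Part (1) follows from Barbosa--do Carmo's classification of compact weakly stable CMC hypersurfaces, part (2) is the contrapositive of the compactness theorem, and part (3) follows from Shen--Zhu after interpolating $\int|A|^n\leqslant (\sup|A|)^{n-p}\int|A|^p<\infty$.

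The concrete failures in your argument are these. First, part (2) carries \emph{no stability hypothesis whatsoever}, so the stability inequality you feed $h=|\phi|^{q/2}\eta$ into is simply not available there; no amount of cutoff bookkeeping repairs this, and some substitute such as the decay-plus-convexity argument behind the do Carmo--Cheung--Santos compactness theorem is required. Second, in part (1) the hypersurface is only weakly stable, and the mean-zero correction $h=|\phi|^{q/2}\eta-c_R$ does not reduce to the strong inequality: expanding $\int(|\phi|^2+nH^2)h^2$ produces the term $nH^2c_R^2\,\mathrm{vol}(\operatorname{supp}\eta)$ and a cross term with no sign, and when $M$ is compact (which is exactly the relevant case once $H\neq 0$) these do not decay---if $|\phi|^{q/2}$ is nearly constant the corrected test function is nearly zero and the inequality degenerates to a triviality. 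This is precisely why the Barbosa--do Carmo theorem is a genuine theorem rather than a cutoff computation. Third, in part (3) your admissible window $q\in(2-2\sqrt{2/n},\,2+2\sqrt{2/n})$ cannot reach $q\geqslant p$ when $p$ is close to $n$ for $n\geqslant 5$, and the $L^\infty$ bound only lets you \emph{raise} the exponent, not lower it; the paper avoids this by raising the exponent all the way to $n$ and quoting Shen--Zhu. Your closing remarks correctly sense where the difficulties lie, but the proposed fixes do not close them.
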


\indent This paper is organized as follows. In section 2, we follow \cite{SSY} to get $L^p$ estimates for $|\phi|$ in space forms, using Simons' inequality and stability inequality. Section 3 is devoted to obtaining a local curvature estimate by a general mean value equality, which depends on a locally controlled volume growth. In Section 4, we prove a Bernstein-type theorem, given a finite $L^p$-norm curvature condition.\\

\section{$L^p$ Estimates}

At the very beginning, we need a Simons' inequality. See \cite{CY} or \cite{AD} for reference.

\begin{Simon}
\begin{equation}
|\phi|\Delta|\phi|+|\phi|^4\geqslant\frac{2}{n}|\nabla|\phi||^2+n(H^2+c)|\phi|^2-\frac{n(n-2)}{\sqrt{n(n-1)}}H|\phi|^3.\\
\end{equation}
\end{Simon}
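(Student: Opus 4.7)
The plan is to combine three classical ingredients: Simons' identity for the second fundamental form, Okumura's algebraic bound on $\mathrm{tr}(\phi^3)$, and a refined Kato-type inequality that holds on CMC hypersurfaces. All three are standard (cf.\ \cite{CY}, \cite{AD}), so the argument is essentially a careful bookkeeping computation.

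I would start with Simons' identity. Because $H$ is constant and the ambient space has constant sectional curvature $c$, the Hessian-of-$H$ term drops out and the standard Bochner-type computation yields
\begin{equation*}
\tfrac{1}{2}\Delta|A|^2 = |\nabla A|^2 + nH\,\mathrm{tr}(A^3) - |A|^4 + nc\,|A|^2 - n^2 H^2 c.
\end{equation*}
Substituting $A=\phi+H\,\mathrm{Id}$ and using $\mathrm{tr}(\phi)=0$, one expands $\mathrm{tr}(A^3)=\mathrm{tr}(\phi^3)+3H|\phi|^2+nH^3$, $|A|^2=|\phi|^2+nH^2$, and $|A|^4=|\phi|^4+2nH^2|\phi|^2+n^2H^4$. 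Every pure power of $H$ cancels, giving the traceless version
\begin{equation*}
\tfrac{1}{2}\Delta|\phi|^2 = |\nabla\phi|^2 + nH\,\mathrm{tr}(\phi^3) - |\phi|^4 + n(H^2+c)|\phi|^2.
\end{equation*}

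Next I would lower-bound the right-hand side using two tensor inequalities. Okumura's purely algebraic estimate for a traceless symmetric $n\times n$ tensor,
\begin{equation*}
|\mathrm{tr}(\phi^3)| \leq \tfrac{n-2}{\sqrt{n(n-1)}}\,|\phi|^3,
\end{equation*}
gives, after choosing the orientation so that $H\geq 0$, the bound $nH\,\mathrm{tr}(\phi^3)\geq -\tfrac{n(n-2)}{\sqrt{n(n-1)}}H|\phi|^3$. The refined Kato-type inequality
\begin{equation*}
|\nabla\phi|^2 \geq \tfrac{n+2}{n}\,|\nabla|\phi||^2
\end{equation*}
follows from the total symmetry of $\nabla_i h_{jk}$ in its three indices, which is itself a consequence of the Codazzi equation together with $\nabla H=0$.

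To conclude, I would use the pointwise identity $\tfrac{1}{2}\Delta|\phi|^2 = |\phi|\Delta|\phi|+|\nabla|\phi||^2$ to rewrite the left-hand side of the traceless Simons identity and subtract $|\nabla|\phi||^2$ from both sides, so that the combination $|\nabla\phi|^2-|\nabla|\phi||^2$ appears. The Kato gain then converts this combination into at least $\tfrac{2}{n}|\nabla|\phi||^2$, while Okumura supplies the cubic bound; rearranging produces the stated inequality. The only genuine obstacle is bookkeeping: one must track the cancellation of the $H^3$- and $H^4$-contributions carefully when passing from $|A|$ to $|\phi|$. All geometric input is encoded in Codazzi plus constancy of $H$ and the ambient curvature, while Okumura's estimate is purely algebraic.
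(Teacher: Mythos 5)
Your derivation is correct and is essentially the standard argument: the paper itself gives no proof of Lemma 2.1, citing \cite{CY} and \cite{AD}, and your route (Simons' identity for constant $H$ in a space form, the substitution $A=\phi+H\,\mathrm{Id}$ with the stated cancellations, Okumura's estimate, and the refined Kato inequality $|\nabla\phi|^2\geq(1+\tfrac{2}{n})|\nabla|\phi||^2$ from total symmetry of $\nabla_i h_{jk}$) is exactly the one those references carry out. The only points worth flagging are the ones you already note implicitly: the sign convention requires $H\geq 0$ (otherwise the cubic term should carry $|H|$), and the inequality is to be understood in the weak sense at points where $|\phi|$ vanishes.
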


Then, we can derive a standard $L^p$ estimate for $|\phi|$.

\begin{Lp}
Suppose $M^n$ is an oriented strongly stable hypersurface with constant mean curvature $H$ immersed in a space form $N^{n+1}(c)$ of constant sectional curvature $c$. Then for any $0\leqslant q< \sqrt{\frac{2}{n}}$, we have
$$\int_M f^{2q+4}|\phi|^{2q+4}\leqslant\beta(n,H,q,c)(\int_M f^{2q+4}+\int_M |\nabla f|^{2q+4}),$$
for any $f\in C_0^{\infty}(M)$.
\end{Lp}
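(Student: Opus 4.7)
The plan is to follow the Schoen--Simon--Yau machinery, combining the Simons' inequality from Lemma 2.1 with the stability inequality (1.1) via a suitable cut-off.

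First I would multiply the Simons' inequality by $|\phi|^{2q} f^{2q+4}$ and integrate over $M$. Integration by parts on the $|\phi|\Delta|\phi|$ term (which is legitimate since $f$ has compact support) produces the term $-(2q+1)\int |\phi|^{2q}|\nabla|\phi||^2 f^{2q+4}$ together with a cross term $-(2q+4)\int|\phi|^{2q+1}f^{2q+3}\nabla f\cdot\nabla|\phi|$. Rearranging, I get an upper bound of the form
\[
\Bigl(2q+1+\tfrac{2}{n}\Bigr)\!\int\! |\phi|^{2q}|\nabla|\phi||^2 f^{2q+4}\ \le\ \int\! |\phi|^{2q+4} f^{2q+4}+(\text{lower order and cross terms}),
\]
where the lower order terms involve $n(H^2+c)|\phi|^{2q+2}f^{2q+4}$ and $H|\phi|^{2q+3}f^{2q+4}$.

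Next I would test the strong stability inequality (1.1) with $h=|\phi|^{q+1}f^{q+2}$. Expanding $|\nabla h|^2$ produces exactly a multiple $(q+1)^2$ of $\int|\phi|^{2q}|\nabla|\phi||^2 f^{2q+4}$, plus the cross term $2(q+1)(q+2)\int|\phi|^{2q+1}f^{2q+3}\nabla|\phi|\cdot\nabla f$ and the boundary-type term $(q+2)^2\int|\phi|^{2q+2}f^{2q+2}|\nabla f|^2$. Substituting the Simons bound for $\int|\phi|^{2q}|\nabla|\phi||^2 f^{2q+4}$ into this stability inequality yields
\[
\Bigl(1-\tfrac{(q+1)^2}{2q+1+2/n}\Bigr)\!\int\!|\phi|^{2q+4}f^{2q+4}\ \le\ (\text{cross terms and terms with }|\nabla f|\text{ or lower }|\phi|\text{ powers}).
\]
The range $0\le q<\sqrt{2/n}$ is exactly what guarantees $(q+1)^2<2q+1+2/n$, making the coefficient on the left strictly positive — this is the sharp algebraic threshold in the argument.

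With this nontrivial coefficient secured, the remaining task is bookkeeping. I would absorb all cross terms containing $\nabla|\phi|$ back into the gradient term already controlled by Simons, using Young's inequality $2ab\le\varepsilon a^2+\varepsilon^{-1}b^2$ with $a=|\phi|^q f^{q+2}|\nabla|\phi||$ and $b=|\phi|^{q+1}f^{q+1}|\nabla f|$; I would absorb small powers of $|\phi|$ (the $|\phi|^{2q+2}$ and $H|\phi|^{2q+3}$ terms) into $|\phi|^{2q+4}$ at the cost of an additive constant times $f^{2q+4}$. Finally I would convert the mixed polynomial $f^{2q+2}|\nabla f|^2$ into the clean form on the right of the statement via Young's inequality with conjugate exponents $\bigl(\tfrac{q+2}{q+1},q+2\bigr)$, giving
\[
f^{2q+2}|\nabla f|^2\ \le\ \tfrac{q+1}{q+2}f^{2q+4}+\tfrac{1}{q+2}|\nabla f|^{2q+4}.
\]
The main obstacle is purely bookkeeping: making sure every $\varepsilon$ in the Young inequalities is chosen small enough — depending only on $n,H,q,c$ — so that all the absorbing terms actually move to the left without destroying the positive coefficient of $\int|\phi|^{2q+4}f^{2q+4}$. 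Once that is arranged, the constant $\beta(n,H,q,c)$ is assembled from the reciprocals of these $\varepsilon$'s, completing the proof.
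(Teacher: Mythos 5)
Your proposal is correct and follows essentially the same Schoen--Simon--Yau route as the paper: Simons' inequality weighted by $|\phi|^{2q}$ times a cutoff, the stability inequality tested with $|\phi|^{q+1}$ times a cutoff, the threshold $q^2<\tfrac{2}{n}$ making the leading coefficient positive, and Young's inequality to absorb the cross and lower-order terms. The only (cosmetic) difference is that you work with the cutoff $f^{q+2}$ from the outset and clean up $f^{2q+2}|\nabla f|^2$ by Young at the end, whereas the paper carries a plain $f^2$ throughout and substitutes $f\mapsto f^{q+2}$ in its final inequality (2.6).
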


\begin{proof}
First, we multiply (2.1) by $f^2|\phi|^{2q}$, integrate over $M$ and integrate by parts, which gives
\begin{eqnarray}
&\frac{2}{n}\int_M f^2|\phi|^{2q}|\nabla|\phi||^2&+\int_M n(H^2+c)f^2|\phi|^{2q+2}\nonumber \\
&-\int_M \frac{n(n-2)}{\sqrt{n(n-1)}}Hf^2|\phi|^{2q+3}&\leqslant \int_M f^2|\phi|^{2q+4}-\int_M 2f|\phi|^{2q+1}\nabla|\phi|\cdot\nabla f \nonumber \\
& &-(2q+1)\int_M f^2|\phi|^{2q}|\nabla|\phi||^2.
\end{eqnarray}
On the other hand, replacing $f$ in (1.1) by $f|\phi|^{q+1}$, we have
\begin{eqnarray}
\int_M f^2|\phi|^{2q+4}&+&\int_M n(H^2+c) f^2|\phi|^{2q+2}\leqslant\int_M |\nabla f|\phi|^{q+1}|^2 \nonumber \\
                                                    &= & \int_M |\nabla f|\phi|^{q+1}+(q+1)f|\phi|^q\nabla|\phi||^2 \nonumber \\
                                                    &=& \int_M |\nabla f|^2|\phi|^{2q+2}+(q+1)^2\int_M f^2|\phi|^{2q}|\nabla|\phi||^2\nonumber \\
                                                    & & 2(q+1)\int_M f|\phi|^{2q+1}\nabla|\phi|\cdot\nabla f.
\end{eqnarray}
Then, by multiplying (2.2) by $(1+q)$ and adding up with (2.3), we get
\begin{eqnarray}
(\frac{2}{n}+q)(1+q)\int_M f^2|\phi|^{2q}|\nabla|\phi||^2 \leqslant q\int_M f^2|\phi|^{2q+4}+\int_M|\nabla f|^2|\phi|^{2q+2} \nonumber \\
+\frac{n(n-2)}{\sqrt{n(n-1)}}H(1+q)\int_M f^2 |\phi|^{2q+3}-n(H^2+c)(q+2)\int_M f^2|\phi|^{2q+2}.
\end{eqnarray}
Using Young's inequality on the last term of (2.3) and using (2.4), we obtain
\begin{eqnarray}
[1-\frac{(1+q+\varepsilon)q}{q+\frac{2}{n}}]\int_M f^2|\phi|^{2q+4} \leqslant (n-2)H\beta_1(n,q,\varepsilon)\int_M f^2 |\phi|^{2q+3} \nonumber \\
+\beta_2(n,q,\varepsilon)\int_M |\phi|^{2q+2}|\nabla f|^2-\beta_3(n,q,\varepsilon)(H^2+c)\int_M f^2 |\phi|^{2q+2},
\end{eqnarray}
where the constants $\beta_1,\beta_2,\beta_3>0$. To ensure
$$1-\frac{(1+q+\varepsilon)q}{q+\frac{2}{n}}\geqslant 0,$$
we take $q^2<\frac{2}{n}$ and fix a proper $\varepsilon$. Again from (2.5) by Young's inequality, we obtain\\
\begin{equation}
\int_M f^2|\phi|^{2q+4}\leqslant \beta_6(n,H,q,c)(\int_M f^2+\int_{M^+}\frac{|\nabla f|^{2q+4}}{f^{2q+2}}).
\end{equation}
So replacing $f$ in (2.6) by $f^{q+2}$, the proof is done.
\end{proof}

Now we consider the case $c=0$, i.e. $N=\mathbb{R}^{n+1}$. From Lemma 2.1, we know that in the $\mathbb{R}^{n+1}$ case,
\begin{eqnarray}
\frac{1}{2}\Delta |\phi|^2 &\geqslant &(1+\frac{2}{n})|\nabla|\phi||^2-|\phi|^4+nH^2|\phi|^2-\frac{n(n-2)}{\sqrt{n(n-1)}}H|\phi|^3 \nonumber \\
                           & \geqslant &(1+\frac{2}{n})|\nabla|\phi||^2 -|\phi|^4+nH^2|\phi|^2-\frac{(n-2)^2}{4(n-1)}|\phi|^4-nH^2|\phi|^2 \nonumber \\
                           & \geqslant & (1+\frac{2}{n})|\nabla|\phi||^2-\frac{n^2}{4(n-1)}|\phi|^4.
\end{eqnarray}
Thus,
\begin{equation}
|\phi|\Delta|\phi|+\frac{n^2}{4(n-1)}|\phi|^4\geqslant \frac{2}{n}|\nabla|\phi||^2.
\end{equation}
Then by following the proof of Proposition 2.2, we get the following proposition.

\begin{Lp2}
Suppose $M^n$ is an oriented strongly stable hypersurface with constant mean curvature $H$ immersed in $\mathbb{R}^{n+1}$. If $n\leqslant 4$, and there exists $q\geqslant 0$ satisfying
\begin{equation}
\frac{n^2}{4(n-1)}q^2+\frac{(n-2)^2}{2(n-1)}q+[\frac{n^2}{4(n-1)}-\frac{2}{n}-1]<0.
\end{equation}
Then for any $f\in C_0^{\infty}(M)$,
$$\int_M f^{2q+4}|\phi|^{2q+4}\leqslant\beta(n,q)\int_M |\nabla f|^{2q+4}.$$
\end{Lp2}

\section{Curvature Estimates}
In this section, we will first prove Theorem 1.1. This type of curvature estimate can be seen in \cite{CHSC} or \cite{CM}.

\begin{MeanValue}
Let $M^n$ be an oriented hypersurface with constant mean curvature $H$ immersed in $\mathbb{R}^{n+1}$ $($$n\geqslant 2$$)$. Assume $x_0\in M$ and $R\leqslant 1$ satisfying $\overline{B}^C_R(x_0)\cap \partial M =\emptyset$. If $f$ is a nonnegative function with $\Delta f \geqslant -C_2R^{-(2+2\delta)}f ,\;\delta>0$, then
$$f^2(x_0)\leqslant \frac{C(n,H)}{R^{n-\delta}}\int_{D^C_R(x_0)}f^2 dv.$$
\end{MeanValue}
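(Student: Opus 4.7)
The plan is to run a Moser iteration on the subsolution inequality, with the Michael--Simon Sobolev inequality on hypersurfaces of $\mathbb{R}^{n+1}$ as the underlying functional tool.

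First, I would derive a Caccioppoli-type estimate. Fixing $p\geqslant 1$ and a Lipschitz cut-off $\eta$ supported inside $D^{C}_{R}(x_{0})$, I would multiply the differential inequality $\Delta f \geqslant -C_{2}R^{-(2+2\delta)}f$ by $\eta^{2}f^{2p-1}$, integrate over $M$, and integrate by parts. After absorbing the mixed cross term $\eta|\nabla\eta|f^{2p-1}|\nabla f|$ by Young's inequality, I expect to obtain
$$\int_{M}\eta^{2}f^{2p-2}|\nabla f|^{2}\,dv \leqslant C(p)\int_{M}\left(|\nabla\eta|^{2} + C_{2}R^{-(2+2\delta)}\eta^{2}\right)f^{2p}\,dv.$$

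Next, I would apply the Michael--Simon Sobolev inequality to $w=\eta f^{p}$. Since $M^{n}$ is immersed in $\mathbb{R}^{n+1}$ with constant mean curvature $H$, the $L^{2}$-form reads
$$\left(\int_{M}w^{2n/(n-2)}\,dv\right)^{(n-2)/n} \leqslant C(n)\int_{M}\left(|\nabla w|^{2} + H^{2}w^{2}\right)dv.$$
Expanding $|\nabla(\eta f^{p})|^{2}$, inserting the Caccioppoli bound, and using $R\leqslant 1$ to absorb $R^{-2}$ into $R^{-(2+2\delta)}$, I expect a reverse H\"older inequality of the form
$$\left(\int_{M}(\eta f^{p})^{2\chi}\,dv\right)^{1/\chi} \leqslant C(n,H)\,p^{2}\int_{M}\left(|\nabla\eta|^{2} + R^{-(2+2\delta)}\eta^{2}\right)f^{2p}\,dv,$$
with $\chi:=n/(n-2)$.

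Then I would iterate with nested extrinsic balls: take radii $R_{k}=R/2+R/2^{k+1}$, cut-offs $\eta_{k}$ with $\eta_{k}\equiv 1$ on $D^{C}_{R_{k+1}}$ and supported in $D^{C}_{R_{k}}$, satisfying $|\nabla\eta_{k}|\leqslant C\cdot 2^{k+1}/R$, together with exponents $p_{k}=\chi^{k}$. The left-hand sides converge to $\sup_{D^{C}_{R/2}(x_{0})}f^{2}$ as $k\to\infty$; since $f^{2}(x_{0})\leqslant\sup_{D^{C}_{R/2}}f^{2}$, the stated mean value inequality will follow once the telescoping product of constants is controlled.

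The hard part will be the bookkeeping of that product so that the final exponent of $R$ on the right comes out to $-(n-\delta)$ as claimed. The tension is that both the cut-off gradient factor $|\nabla\eta_{k}|^{2}\sim 4^{k}/R^{2}$ and the zeroth-order coefficient $R^{-(2+2\delta)}$ contribute at every step, and the geometric series $\sum_{k}1/p_{k}=n/2$ must combine with these in a way that yields the sharp power. A minor technical point is keeping all integrations inside the connected component $D^{C}_{R}(x_{0})$, which is automatic provided the cut-offs are chosen to vanish outside that component.
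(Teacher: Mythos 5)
Your plan (Caccioppoli estimate, Michael--Simon Sobolev inequality, Moser iteration over extrinsic balls) is a legitimate route to \emph{a} local boundedness estimate for subsolutions of $\Delta f\geqslant -\Lambda f$, but it is not the paper's argument, and the difficulty you defer to ``bookkeeping'' is in fact a structural obstruction: the iteration cannot produce the exponent $n-\delta$. In your reverse H\"older inequality the coefficient $\Lambda=C_2R^{-(2+2\delta)}$ enters at every stage, and with $p_k=\chi^k$, $\chi=n/(n-2)$, the telescoping product contributes at least $\Lambda^{\sum_k 1/p_k}=\Lambda^{n/2}=C_2^{n/2}R^{-n(1+\delta)}$ (the cut-off terms $4^k R^{-2}$ are dominated by $\Lambda$ since $R\leqslant 1$ and only add a finite constant). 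The best outcome of the scheme is therefore $\sup_{D^C_{R/2}}f^2\leqslant C\,R^{-n(1+\delta)}\int_{D^C_R}f^2$, which for $R\leqslant 1$ is strictly \emph{weaker} than the claimed $R^{-(n-\delta)}$: the statement asks for a gain of $R^{\delta}$ over the trivial mean value scaling $R^{-n}$, whereas any rescaling to the unit ball turns the potential bound into $C_2R^{-2\delta}\to\infty$, so the Moser constant can only deteriorate as $R\to 0$. (Separately, your $L^2$ Sobolev inequality with exponent $2n/(n-2)$ is unavailable at $n=2$, which the lemma includes.)

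The paper's proof uses a different, one-step mechanism. It forms the product hypersurface $\overline M=M\times[-R^{1+\delta},R^{1+\delta}]\subset\mathbb{R}^{n+2}$, still of constant mean curvature, and observes that $g(x,t)=f(x)\exp\bigl(\sqrt{C_2}\,t/R^{1+\delta}\bigr)$ is genuinely subharmonic on $\overline M$: the exponential in the extra variable exactly absorbs the zeroth-order term. A single application of the Michael--Simon mean value inequality on the $(n+1)$-dimensional $\overline M$ at radius $R$ gives $f^2(x_0)\leqslant C R^{-(n+1)}\int_{\overline D_R}g^2\,d\overline v$, and because the slab has thickness only $2R^{1+\delta}$ in the fiber direction the right-hand side is bounded by $C R^{-(n+1)}\cdot 2R^{1+\delta}\int_{D_R}f^2=CR^{-(n-\delta)}\int_{D_R}f^2$. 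So the exponent $n-\delta$ comes from the thinness of the slab relative to the radius at which the higher-dimensional mean value inequality is applied, not from cancellations inside an iteration, and your approach is missing this idea entirely. Be aware, though, that this step is itself delicate: the boundary $M\times\{\pm R^{1+\delta}\}$ of the slab lies inside the extrinsic ball of radius $R$, which the Michael--Simon inequality does not permit, and testing the statement with $f\equiv 1$ on a hyperplane shows the inequality as literally written cannot hold with a constant independent of $R$. Any rigorous reproof would have to confront that point as well.
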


\begin{proof}
It is obvious that we only need to consider the case that $\overline{B}_R(x_0)$ is connected. Let $\overline M =M\times[-R^{1+\delta},R^{1+\delta}]$, so $\overline M \to \mathbb{R}^{n+2}$ is also a hypersurface with constant mean curvature under the standard metric of a product space.\\
\indent Define a function on $\overline M $ by
\begin{equation}
g(x,t)=f(x)\exp{\frac{\sqrt{C_2}t}{R^{1+\delta}}},
\end{equation}
and then $$\Delta_{\overline M}g(x,t)=\exp{\frac{\sqrt{C_2}t}{R^{1+\delta}}}(\Delta f+\frac{C_2}{R^{2+2\delta}})\geqslant 0.$$
\indent Thus, by the mean value inequality for subharmonic functions proved by Michael and Simon \cite{MS}, letting $x=x_0,\;t=0$ and using the condition $R\leqslant 1$, we get
\begin{equation}
f^2(x_0)\leqslant \big(1+\frac{H}{1!}+\cdots+\frac{H^{n+1}}{(n+1)!}\big)\frac{1}{\omega_{n+1}R^{n+1}}\int_{\overline{D_R}(x_0,0)\subset \overline M}f^2 d\overline{v},
\end{equation}
where $\omega_{n+1}$ denotes the volume of the unit ball in $\mathbb{R}^{n+1}$, and $d\overline{v}$ denotes the volume element of $\overline M$.\\
By
$$\{\overline{D_R}(x_0,0)\subset \overline M\}\subset \{D_R(x_0)\subset M\}\times[-R^{1+\delta},R^{1+\delta}].$$
Then, we have
\begin{equation}
f^2(x_0)\leqslant \frac{C(n,H)}{R^{n-\delta}}\int_{D_R(x_0)}f^2 dv.
\end{equation}
\end{proof}

\indent By the general mean value inequality in Lemma 3.1, we are now able to prove Theorem 1.1.
\begin{proof}[Proof of Theorem 1.1]
Without loss of generality, we assume $R_c=1$.\\
For a chosen $0<r\leqslant 1$, and $x\in M$ satisfying $\overline{B}^C_{r}(x)\cap\partial M =\emptyset$, let $\sigma_0\in[0,r]$,
such that $$\max_{0\leqslant\sigma\leqslant r}\sigma^{2+2\delta}\sup_{D^C_{r-\sigma}(x)}|\phi|^2=\sigma_0^{2+2\delta}\sup_{D^C_{r-\sigma_0}(x)}|\phi|^2.$$
Let $x_0\in \overline{D}^C_{r-\sigma_0}(x)$, such that $$\sup_{\overline{D}^C_{r-\sigma_0}(x)}|\phi|^2=|\phi|^2(x_0).$$
Then $$\sup_{D^C_{\frac{\sigma_0}{2}}(x_0)}|\phi|^2\leqslant\sup_{D^C_{r-\frac{\sigma_0}{2}}(x)}|\phi|^2\leqslant2^{2+2\delta}|\phi|^2(x_0).$$
If $\sigma_0^{2+2\delta}|\phi|^2(x_0)\leqslant 1$, then the inequality is true. Otherwise, assume $$\sigma_0^{2+2\delta}|\phi|^2(x_0)>1.$$
Choose $2\eta<\sigma_0\leqslant r<1$, such that $(2\eta)^{2+2\delta}|\phi|^2(x_0)=1$. Then we get
$$\sup_{D^C_{\eta}(x_0)}\eta^{2+2\delta}|\phi|^2 \leqslant \sup_{D^C_{\frac{\sigma_0}{2}}(x_0)}|\phi|^2\eta^{2+2\delta}\leqslant
2^{2+2\delta}|\phi|^2(x_0)\eta^{2+2\delta}=1.$$
So from (2.7), we know that in $D^C_{\eta}(x_0)$,
\begin{equation}
\Delta |\phi|^2\geqslant -C_3(n)|\phi|^4 \geqslant -C_3(n)\eta^{-(2+2\delta)}|\phi|^2.
\end{equation}
By using Lemma 3.1 and Proposition 2.2 and choosing a standard cut-off function, we have
\begin{eqnarray*}
|\phi|^4(x_0)&\leqslant & \frac{C(n,H)}{\eta^{n-\delta}}\int_{D^C_{\eta}(x_0)}|\phi|^4 dv \\
             &\leqslant & \frac{C_4(n,H)}{\eta^{n-\delta}} (1+\frac{1}{\eta^{4}})|D^C_{2\eta}(x_0)|\\
             &\leqslant & C_5(n,H)\eta^{\alpha+\delta-4}.
\end{eqnarray*}
Using $|\phi|^2(x_0)=(2\eta)^{-(2+2\delta)}$, then
\begin{equation}
\eta^{-5\delta-\alpha}\leqslant C_5(n,H)2^{4+4\delta}.
\end{equation}
So we choose some $\delta>-\frac{\alpha}{5}$, and then choose $r$ to be
\begin{equation}
0<r=r_0<C_5(n,H)^{-\frac{1}{5\delta+\alpha}}2^{-\frac{4+4\delta}{5\delta+\alpha}}.
\end{equation}
Then we get a contradiction, which completes the proof.
\end{proof}

\begin{2}
Note that condition $(1.2)$ is equivalent to the following condition:\\
If there exists $R_c$, such that for any $R<\min\{R_c,r(x,\partial M)\}$,
\begin{equation}
|D^C_{R}(x)|\leqslant C_1, \\
\end{equation}
where the constant $C_1$ is independent on $R$ and $x$.
\end{2}

\begin{proof}[Proof of Theorem 1.2]
If $M$ satisfies the volume condition (1.2), then from Theorem 1.1, we set $r=\min\{r_0,r(x,\partial M)\}$ and $\sigma=\frac{r}{2}$, obtaining that for any $x\in M$,
\begin{equation}
|\phi(x)|^2\leqslant \sigma^{-(2+2\delta)}\leqslant 2^{2+2\delta}r^{-(2+2\delta)},
\end{equation}
which completes the proof by the volume comparison theorem.
\end{proof}

\begin{3}
When $\partial M=\emptyset$, it is remarkable that the upper bound of $|\phi|$ only depends on $n$ and $H$, not even the hypersurface $M$ itself. Theorem $1.2$ shows that the locally controlled volume growth $(1.2)$ for extrinsic balls implies a globally controlled volume growth for intrinsic balls.\\
The boundary point set $\partial M$ is defined as the limit point set of Cauchy series of points in $M$ under the Euclidean topology. But our proof is still valid even for hypersurfaces like $\sqrt{x^2+y^2}=\sin\frac{1}{z}+1$ $(z>0)$, which has topological boundary points. Therefore, the condition $\partial M=\emptyset$, as well as in the following theorems, can be relaxed.
\end{3}

\indent Assume $|D^C_{R}|R^{-n-\alpha}$ is bounded for any radius $R$ and some $\alpha$, which clearly satisfies the volume condition (1.2). So from Theorem 1.2 and (2.7), we get
\begin{equation}
\Delta f+C_6(n,H)f\geqslant 0,
\end{equation}
where $$f=|\phi|^2.$$
\indent With the Sobolev inequality proved in \cite{HS} or \cite{MS}, we are able to prove the Schoen-Simon-Yau type curvature estimate by Moser iteration.

\begin{Sobolev}[Sobolev Inequality]
Suppose $M^n\to \mathbb{R}^{n+1}$ is an isometric immersion with constant mean curvature $H$. Let $h\in C_0^1(M)$ be nonnegative, then
$$\big(\int_M h^{\frac{n}{n-1}}dv\big)^{\frac{n-1}{n}}\leqslant C(n)\int_M (|\nabla h|+h|H|)dv.$$
\end{Sobolev}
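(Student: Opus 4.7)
The stated inequality is the classical Michael--Simon Sobolev inequality for submanifolds with bounded mean curvature. I will outline the standard strategy, which reduces the Sobolev inequality, via the coarea formula, to a submanifold isoperimetric inequality, and then proves the isoperimetric inequality from a monotonicity formula derived from the first variation of area.

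First I would reduce to an isoperimetric statement. Setting $A_t=\{h>t\}$ and writing $h(x)=\int_0^\infty \chi_{A_t}(x)\,dt$, Minkowski's integral inequality gives
$$\Bigl(\int_M h^{n/(n-1)}\,dv\Bigr)^{(n-1)/n}\leqslant \int_0^\infty |A_t|^{(n-1)/n}\,dt.$$
The target of this reduction is the submanifold isoperimetric inequality
$$|A|^{(n-1)/n}\leqslant C(n)\Bigl(|\partial A|+\int_A |H|\,dv\Bigr)$$
for bounded domains $A\subset M$ with smooth boundary. Granting this, one applies it to $A_t$ (smooth for a.e.\ $t$ by Sard's theorem), invokes the coarea formula $\int_M|\nabla h|\,dv=\int_0^\infty |\partial A_t|\,dt$, and uses the layer-cake identity $\int_M h|H|\,dv=\int_0^\infty\!\!\int_{A_t}|H|\,dv\,dt$ to obtain the desired estimate.

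Second, to establish the isoperimetric inequality I would appeal to the first variation formula: for every $X\in C_0^1(\mathbb{R}^{n+1};\mathbb{R}^{n+1})$,
$$\int_M \operatorname{div}_M X\,dv=-\int_M \langle X,\vec H\rangle\,dv.$$
Applying this to the radial field $X(x)=(x-x_0)\eta(|x-x_0|)$, with $\eta$ a smooth approximation of $\chi_{[0,\rho]}$, and using $\operatorname{div}_M(x-x_0)=n$, one derives a monotonicity-type inequality for $V(\rho):=|M\cap B_\rho(x_0)|$, schematically
$$\frac{d}{d\rho}\Bigl(\rho^{-n}V(\rho)\,e^{|H|\rho}\Bigr)\geqslant 0,$$
where the exponential factor absorbs the mean-curvature contribution coming from $\langle X,\vec H\rangle$. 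Integration from $0$ to $\rho$ then yields a uniform lower density bound $V(\rho)\geqslant c(n)\rho^n$ at every point of $M$, valid on a scale controlled by $|H|^{-1}$.

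Third, a Vitali covering of $A$ by balls centered on $\partial A$ transforms this pointwise density estimate into the required isoperimetric inequality: every ball contributes a definite fraction of its $n$-dimensional volume to either $|\partial A|$ (via the boundary trace at $\partial B_\rho\cap M$) or to $\int_A|H|$ (via the absorbed mean-curvature term), and summing over a maximal disjoint subfamily yields the global estimate. The technical heart of the argument, and the main obstacle, is the monotonicity formula: carefully extracting from the first variation both the surface-area term and the mean-curvature term while tracking a sharp constant $C(n)$ independent of the particular $M$. Since this result is classical, in the write-up one simply cites \cite{MS} and \cite{HS}.
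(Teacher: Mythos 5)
The paper gives no proof of this lemma at all—it simply cites Michael–Simon \cite{MS} and Hoffman–Spruck \cite{HS}—and your proposal correctly identifies the statement as that classical result and ends by citing the same sources. Your sketch (layer-cake plus Minkowski to reduce to the isoperimetric inequality, coarea formula, first-variation monotonicity with an exponential factor absorbing $|H|$, and a covering argument) is a faithful outline of the standard Michael–Simon proof, so there is nothing to compare against in the paper and no gap to report.
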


\begin{estimate}
Let $M^n$ be an oriented strongly stable CMC hypersurface immersed in $\mathbb{R}^{n+1}$ $($$n\geqslant 2$$)$ with $\partial M=\emptyset$, and assume $|D^C_{R}(x)|\leqslant \kappa R^{n+\alpha}$ for any $R$ and some $\alpha$.\\
$(1)$ If $n\geqslant 2$, and $R<1$, then
$$\sup_{B_{\theta R}(x)}|\phi|\leqslant C(n,H,\theta,\kappa)R^{\frac{\alpha-n}{4}-1},$$
for every $\theta\in(0,1)$;\\
$(2)$ If $2\leqslant n\leqslant 4$, and $R\geqslant 1$, then
$$\sup_{B_{\theta}(x)}|\phi|\leqslant C(n,H,\theta,\kappa)R^{\frac{\alpha+n}{2q+4}-1},$$
for every $\theta\in(0,1)$ and for $0\leqslant q\leqslant 0.18$.\\
Here the inequalities are still true for $D^C_R(x)$.
\end{estimate}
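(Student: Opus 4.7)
The plan is a Moser iteration for $|\phi|^2$ driven by the differential inequality (3.9), the Michael--Simon Sobolev inequality (Lemma 3.3), and the polynomial volume growth $|D^C_R|\leq \kappa R^{n+\alpha}$, with the initial $L^p$-estimates supplied by Proposition 2.2 in regime (1) and by Proposition 2.3 in regime (2). Theorem 1.2 first delivers a global pointwise bound $|\phi|\leq C(n,H,\kappa)$, so (3.9) rewrites as $\Delta|\phi|^2 + C_6(n,H)|\phi|^2\geq 0$ uniformly on $M$.

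Next, I would test this inequality against $|\phi|^{2\beta}\eta^2$ for a Lipschitz cutoff $\eta$, integrate by parts, and then apply Michael--Simon with $h=(\eta|\phi|^{\beta+1})^2$, using Cauchy--Schwarz to convert its native $L^1$-form into an $L^2$ energy estimate with gain factor $n/(n-1)$. Iterating along a geometric sequence $\beta_k$ with ratio $n/(n-1)$ over a telescoping family of cutoffs yields, for each $p\geq 1$, an estimate of the shape
$$\sup_{B_{\theta R}(x)} |\phi|^{2p} \leq C(n,H,\theta)\,R^{-n}\int_{D^C_R(x)} |\phi|^{2p}\,dv,$$
where in regime (1) a rescaling to the scale $R$ is used to absorb the zero-order term from (3.9) at the cost of a controlled additional power of $R$, whereas in regime (2) the iteration is performed on the fixed ball $B_\theta\subset B_R$ and no rescaling is needed.

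For part (1), I would take $p=2$ and apply Proposition 2.2 with $q=0$ to a cutoff equal to $1$ on $D^C_R$, supported in $D^C_{2R}$, with gradient bounded by $2/R$; the volume growth gives $\int_{D^C_R}|\phi|^4\leq C R^{n+\alpha-4}$ for $R<1$, which combined with the Moser bound produces the asserted $R^{(\alpha-n)/4-1}$ scaling after routine exponent bookkeeping. For part (2) with $R\geq 1$, the same cutoff in Proposition 2.3 yields $\int_{D^C_R}|\phi|^{2q+4}\leq C R^{n+\alpha-2q-4}$, and plugging into the Moser estimate on $B_\theta$ produces the $R^{(\alpha+n)/(2q+4)-1}$ scaling. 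The dimensional restriction $n\leq 4$ and the range $0\leq q\leq 0.18$ are inherited directly from the sign condition (2.9): for $n\geq 5$ the constant $\frac{n^2}{4(n-1)}-\frac{2}{n}-1$ is non-negative and admits no positive $q$, while for $n=4$ the positive root of the quadratic is $(\sqrt{3}-1)/4\approx 0.183$.

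The main technical obstacle is the Moser bookkeeping: controlling how the constants $C(\beta_k)$ grow along the iteration so that $\prod_k C(\beta_k)^{1/\beta_k}$ converges, ensuring the final constant depends only on $(n,H,\theta,\kappa)$, and cleanly absorbing the zero-order term at each stage --- which is permissible precisely because Theorem 1.2 has already made $|\phi|$ bounded uniformly on $M$. A secondary subtlety is the rescaling in part (1), whose $R$-dependent factor must be tracked through the iteration to produce the claimed exponent.
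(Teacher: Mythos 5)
Your proposal is correct and follows essentially the same route as the paper: Theorem 1.2 supplies the uniform bound making $\Delta|\phi|^2+C_6|\phi|^2\geqslant 0$ global, a Moser iteration driven by the Michael--Simon Sobolev inequality with gain $\frac{n}{n-1}$ gives the mean-value estimate, the initial integrals come from Proposition 2.2 (part (1)) and Proposition 2.3 (part (2)) together with the volume growth, and your derivation of the restrictions $n\leqslant 4$ and $q\leqslant 0.18$ from (2.9) (root $(\sqrt{3}-1)/4\approx 0.183$ at $n=4$) matches the paper exactly. The only slip is in your displayed intermediate estimate, which should carry $R^{-2n}$ rather than $R^{-n}$ (since $2\sum_k\big(\tfrac{n-1}{n}\big)^k=2n$ with the gain factor $\tfrac{n}{n-1}$ you state); this is precisely the factor your claimed final exponent $R^{\frac{\alpha-n}{4}-1}$ requires, so it is a bookkeeping typo rather than a gap.
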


\begin{proof}
From (3.9), we know that, for $s\geqslant 1$,
\begin{equation}
\Delta f^{s}+C_6(n,H)s f^{s}\geqslant 0.
\end{equation}
For $n\geqslant 2$, we multiply (3.9) by $\xi^2 f^{s}$ and replace $h$ by $\xi^2 f^{2s}$ in Sobolev inequality in Lemma 3.2. Then by combining the two inequalities together and choosing $\xi$ to be standard cut-off functions defined on intrinsic geodesic balls of $M$, we get
\begin{equation}
\big(\int_{B_{R_{i+1}}(x)} f^{\frac{2n}{n-1}s}dv\big)^{\frac{n-1}{n}}\leqslant C_7(n,H)s[\frac{1}{(R_i-R_{i+1})^2} +1] \int_{B_{R_i}(x)} f^s.
\end{equation}
So by taking $R_0<1,\;R_{i+1}=R_i-r_i,\;r_i=2^{-i-2}R_0<1,\;q_i=(\frac{n}{n-1})^i$, we obtain
\begin{equation}
I_{i+1}\leqslant C_7(n,H)^{\frac{1}{q_i}}q_i^{\frac{1}{q_i}}[r_i^{-2}+1]^{\frac{1}{q_i}} I_i \leqslant C_7(n,H)^{\frac{1}{q_i}}q_i^{\frac{1}{q_i}}(2r_i^{-2})^{\frac{1}{q_i}} I_i,
\end{equation}
where $$I_i=\big(\int_{B_{R_i}(x)}f^{2q_i}\big)^{\frac{1}{q_i}}.$$
So from Proposition 2.2, and for $R\geqslant R_0$, we have
\begin{eqnarray}
\sup_{B_{\frac{R_0}{2}}(x)}|\phi|^4 &=& \lim_{k\to\infty} I_k \nonumber \\
                               &\leqslant& C_{8}(n,H) R_0^{-2\sum\limits_{k=0}^{\infty}\frac{1}{q_k}} \int_{B_{R_0}(x)}|\phi|^4 \nonumber \\
                               &\leqslant& C_{9}(n,H) R_0^{-2n} [1+(2R-R_0)^{-4}]|D_{2R}(x)|.
\end{eqnarray}
Take $R=R_0<1$, and then we get
\begin{equation}
\sup_{B_{\frac{R}{2}}(x)}|\phi|\leqslant C_{10}(n,H,\kappa)R^{\frac{\alpha-n}{4}-1}.
\end{equation}
(2) By setting $q_i=(q+2)(\frac{n}{n-1})^i$ ($q\geqslant 0$) and $R>R_0=1$, we can derive an inequality similar to (3.13), which completes the proof.
\end{proof}

\section{Hypersurfaces With Finite $L^p$-norm Curvature}

In this section, we will prove Theorem 1.3.\\
Rewriting (2.7) by using $f=|\phi|^2$, we get
\begin{equation}
\Delta f-(1+\frac{2}{n})\frac{|\nabla f|^2}{2f}+\frac{n^2}{2(n-1)}f^2\geqslant 0.
\end{equation}
Consider $f^{\gamma}$, where $\gamma\geqslant \frac{1}{2}-\frac{1}{n}$.
\begin{eqnarray}
\Delta f^{\gamma}+\frac{\gamma n^2}{2(n-1)}f^{\gamma+1}&=& \gamma(\gamma-1)f^{\gamma-2}|\nabla f|^2+\gamma f^{\gamma-1}\Delta f+\frac{\gamma n^2}{2(n-1)}f^{\gamma+1} \nonumber \\
&=& \gamma f^{\gamma-1}[\Delta f-(1-\gamma)\frac{|\nabla f|^2}{f}+\frac{n^2}{2(n-1)}f^2] \geqslant 0.
\end{eqnarray}

So from the proof of Lemma 3.1 and Theorem 1.1, given $\int_M |\phi|^{2\gamma} dv<\infty$ for a certain $\gamma\geqslant \frac{1}{2}-\frac{1}{n}$, we obtain
\begin{equation}
|\phi|^{2\gamma}(x_0)\leqslant \frac{C(n,H)}{\eta^{n-\delta}}2^{(2+2\delta)\gamma}\int_{D_{\eta}(x_0)}|\phi|^{2\gamma}dv\leqslant \widetilde{C}(n,H)2^{(2+2\delta)\gamma}\eta^{\delta-n}.
\end{equation}
By choosing $\delta>\frac{n-2\gamma}{2\gamma+1}$, we can actually prove a pointwise estimate similar to Theorem 1.1, which means $|\phi|$ is bounded on $M$, given $\partial M=\emptyset$.

\begin{bound}
Let $M^n$ be an oriented hypersurface with constant mean curvature immersed in $\mathbb{R}^{n+1}$ $($$n\geqslant 2$$)$ and $\partial M=\emptyset$. If there exists
$p\geqslant 1-\frac{2}{n}$, such that $$\int_M |\phi|^p dv<\infty,$$
then $|\phi|$ is bounded on $M$.
\end{bound}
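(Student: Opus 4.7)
My plan is to run the blow-up argument of Theorem 1.1, substituting the global hypothesis $\int_M|\phi|^p\,dv<\infty$ for the locally controlled $L^4$ bound that was used there via Proposition 2.2. Applying (4.2) with exponent $\gamma=p/2$, which is admissible precisely when $p\geq 1-\tfrac{2}{n}$, yields the key differential inequality
$$\Delta|\phi|^{p}+\tfrac{pn^2}{4(n-1)}\,|\phi|^{p+2}\geq 0.$$

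Before rescaling, I would establish an $L^1$ version of Lemma 3.1: if $f\geq 0$ satisfies $\Delta f\geq -C_2 R^{-(2+2\delta)}f$ on $D^C_R(x_0)$ with $R\leq 1$, then
$$f(x_0)\leq\frac{C(n,H)}{R^{n-\delta}}\int_{D^C_R(x_0)}f\,dv.$$
The proof copies that of Lemma 3.1 verbatim, except that the Michael--Simon mean value inequality on $\overline M=M\times[-R^{1+\delta},R^{1+\delta}]$ is applied to the subharmonic function $g(x,t)=f(x)\exp(\sqrt{C_2}\,t/R^{1+\delta})$ itself, rather than to its square. This $L^1$ refinement is essential: applying Lemma 3.1 as stated to $|\phi|^{p/2}$ would require $p\geq 2-\tfrac{4}{n}$, strictly stronger than the stated hypothesis.

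Now I would argue by contradiction. Suppose $|\phi|$ is unbounded. Fix $r=1$ and, using that $\partial M=\emptyset$, choose $x\in M$ with $|\phi|^2(x)$ arbitrarily large. As in the proof of Theorem 1.1, pick $\sigma_0\in[0,r]$ maximizing $\sigma^{2+2\delta}\sup_{D^C_{r-\sigma}(x)}|\phi|^2$ and a point $x_0\in\overline{D}^C_{r-\sigma_0}(x)$ attaining the inner supremum. Since the maximum exceeds $1$, one selects $2\eta\leq\sigma_0$ with $(2\eta)^{2+2\delta}|\phi|^2(x_0)=1$, and the maximality gives $\sup_{D^C_\eta(x_0)}|\phi|^2\leq\eta^{-(2+2\delta)}$. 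Substituting into the differential inequality produces
$$\Delta|\phi|^p\geq -C(n)\,\eta^{-(2+2\delta)}|\phi|^p\quad\text{on }D^C_\eta(x_0).$$

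Applying the $L^1$ mean value inequality together with $\int_M|\phi|^p\,dv<\infty$ then gives
$$(2\eta)^{-(1+\delta)p}=|\phi|^p(x_0)\leq C(n,H)\,\eta^{\delta-n}\int_M|\phi|^p\,dv,$$
i.e.\ $\eta^{\,n-\delta-(1+\delta)p}\leq\widetilde C$. Choosing $\delta>\tfrac{n-p}{p+1}$ makes the exponent strictly negative, so sending $|\phi|^2(x_0)\to\infty$ --- equivalently $\eta\to 0$ --- produces the desired contradiction. The main technical hurdle is the $L^1$ strengthening of Lemma 3.1; once that is in place, the rest is a clean transplant of the Theorem 1.1 scaling argument, with the global $L^p$ finiteness replacing the local volume bound.
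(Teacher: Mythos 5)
Your proposal is correct and follows essentially the same route as the paper: the differential inequality (4.2) with $\gamma=p/2$, the blow-up/maximal-point selection from Theorem 1.1, the mean value inequality at scale $\eta$, and the choice $\delta>\frac{n-p}{p+1}$ (identical to the paper's $\delta>\frac{n-2\gamma}{2\gamma+1}$). Your one addition --- isolating and proving the $L^1$ version of Lemma 3.1 via Michael--Simon applied to $g$ rather than $g^2$ --- is exactly what the paper uses implicitly in its display (4.3), and you are right that it is needed to reach the full range $p\geqslant 1-\frac{2}{n}$ rather than only $p\geqslant 2-\frac{4}{n}$.
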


More precisely, we have the following theorem.

\begin{pro1}
Suppose $M^n$ is a complete oriented non-compact CMC hypersurface immersed in $\mathbb{R}^{n+1}$ $($$n\geqslant 2$$)$ with $\partial M=\emptyset$. If there exists $p\geqslant 1-\frac{2}{n}$,
such that $$\int_M |\phi|^p dv<\infty.$$
Then for any $x_0\in M$ and $\varepsilon >0$, there exists $R(x_0)>0$, such that $$\sup_{M\setminus B_{R}(x_0)}|\phi|<\varepsilon.$$
\end{pro1}

\begin{proof}
From the finite $L^p$-norm curvature condition, we know from Proposition 4.1 that $|\phi|$ is bounded on $M$. Fix $x_0\in M$. Then for any $\varepsilon >0$, there exists $R(x_0)>0$, such that $$\int_{M\setminus B_{R}(x_0)}|\phi|^p<\varepsilon.$$
Similar to the proof of Theorem 3.3, by taking $R_0=1$, we have
$$\sup_{M\setminus B_{R+1}(x_0)}|\phi|<C(n,H)^{\frac{1}{4\gamma}}\varepsilon^{\frac{1}{4\gamma}},$$
which finishes the proof when $p\geqslant 2-\frac{4}{n}$.\\
If $1-\frac{2}{n}\leqslant p<2-\frac{4}{n}$, the conclusion is still true, since $|\phi|$ is bounded.
\end{proof}

By Theorem 4.2, we are able to prove a compactness theorem. See \cite{DoCS} for reference.
\begin{pro2}
Suppose $M^n$ is a complete oriented CMC hypersurface immersed in $\mathbb{R}^{n+1}$ $($$n\geqslant 2$$)$ with $H\neq 0$ and $\partial M=\emptyset$. If there exists $p\geqslant 1-\frac{2}{n}$,
such that $$\int_M |\phi|^p dv<\infty.$$
Then $M$ must be compact.
\end{pro2}

\indent Now we begin the proof of Theorem 1.3.
\begin{proof}[Proof of Theorem 1.3]
Case 1: If $H\neq 0$, Theorem 4.3 shows that $M$ is compact. Then $M$ has to be a round sphere by \cite{BACA} if $M$ is weakly stable.\\
\indent Case 2: If $M$ is non-compact, then $M$ must be minimal also by Theorem 4.3.\\
\indent Case 3: If $M$ is stable minimal, and there exists $1-\frac{2}{n} \leqslant p \leqslant n$, such that $\int_M |A|^p dv<\infty$. Then $M$ has to be a hyperplane by Shen and Zhu's result in \cite{ShZh}, since $|A|$ is bounded on $M$ by Proposition 4.1.
\end{proof}

\begin{4}
Compared to present results of this kind, there are little restrictions to $n$ and $p$, but we need the condition $\partial M=\emptyset$ in our proof, which may not be necessary. The proof of Theorem $1.3$ is still true for hypersurfaces mentioned in Remark $2$, whose boundary point is not empty, but we can not handle manifolds like the open upper half plane, which is obviously not complete. So it is natural to ask if there is any complete non-compact hypersurface isometrically immersed in $\mathbb{R}^{n+1}$ $($$n\geqslant 2$$)$ with $\partial M\neq\emptyset$, satisfying that there exists universal $R_M$, such that $D_{R_M}(x)$ is connected for any $x\in M$.
\end{4}

\end{document}